\documentclass{siamltex}
\usepackage{amssymb}
\usepackage{amsmath}
\usepackage{amsfonts}
\usepackage{graphicx}
\usepackage{xcolor}
\usepackage{float}
\usepackage{multicol}
\usepackage{subcaption}
\usepackage{multirow}
\title{Energy dissipation rates
of ensemble eddy viscosity models of turbulence: the periodic box\thanks{Submitted to the editors DATE.{The research herein was partially supported by NSF grant DMS 241089.}}}
\author{William Layton\thanks{ 
Department of Mathematics, University of Pittsburgh, Pennsylvania, 15260, USA, (wjl@pitt.edu).}
\and Nanda Nechingal Raghunathan\thanks{ 
Department of Mathematics, University of Pittsburgh, Pennsylvania, 15260, USA  ({nan158@pitt.edu}).}}

\date{}

\begin{document}
\maketitle
\begin{abstract}
Classical eddy viscosity models of turbulence add an eddy viscosity term
based on the Kolmogorov-Prandtl parameterization by a turbulent length scale 
$l$ and a turbulent kinetic energy $k^{\prime }$. Approximations of the
unknowns $l,k^{\prime }$ are typically constructed by solving
multi-parameter systems of nonlinear convection-diffusion-reaction
equations. Often these over-diffuse so additional fixes are added.
Alternately, one can solve an ensemble of NSE's with perturbed data and
simply compute directly $k^{\prime }$(without modeling). The question then
arises: Does this ensemble eddy viscosity approach over-diffuse solutions?
We prove herein that for turbulence in a periodic box it does not.
\end{abstract}

\begin{keywords}
ensemble, eddy viscosity, turbulence, energy dissipation 
\end{keywords}

\section{ Introduction}\label{sec1}

\begin{center}
      \textit{We dedicate this paper to Max Gunzburger.\\ 
      He inspired the adventure we continue here, \\
      and was a mentor and friend along the way!}
\end{center}

In the numerical simulation of flows incomplete data, quantification of
uncertainty, Cheung, Oliver, Prudencio, Prudhomme, Moser \cite{COPPM11},
limits on forecasting skill, Kalnay \cite{K03}, quantification of flow
sensitivities, Martin, Xue \cite{MX06}, error estimation, Fortin, Abaza,
Anctil, Turcotte \cite{FAT14}\,  and other issues, Leutbecher and Palmer 
\cite{LP10}, lead to the problem of computing ensembles of velocities and
pressures. At higher Reynolds numbers, each realization is approximated by
adding to a discretization a turbulence model with eddy viscosity $\nu_{\text{turb}}\mathbf{(\cdot )}$. \ Thus, a continuum turbulence model (like (\ref%
{eq:EEVmodel}) below) exists as an intermediate idea between a physically
realized turbulent flow and it's numerical simulation. Nevertheless, the
analysis of a continuum model (herein) is valuable for delineating 
both
positive and negative aspects of the numerical solution independent of
factors such as grid orientation, solvers, stopping criteria and so on.

The velocity and pressure ensemble $u_{j}=u(x,t;\omega
_{j}),p_{j}=p(x,t;\omega _{j}),j=1,\cdot \cdot \cdot ,J,$ in the flow domain 
$\Omega $\ satisfy\footnote{%
To simplify a non-essential feature of the analysis associated with Korn's
inequality we analyze the EV model with the full gradient rather than the
deformation tensor.}
\begin{align}
    \label{eq:EEVmodel}
    \begin{cases}
        u_{t}+u\cdot \nabla u-\nu \triangle u-\nabla \cdot \left( \nu_{\text{turb}}\mathbf{%
        (\cdot )}\text{ }\nabla u\right) +\nabla p
        =f(x;\omega _{j}), \\ 
        \nabla \cdot u=0, \text{ and }\\
        u(x,0;\omega _{j})=u^{0}(x;\omega _{j})\text{}%
    \end{cases}
\end{align}
plus boundary conditions on $\partial \Omega $. Here $\nu $ is the kinematic viscosity, $\omega _{j}$\ are the sampled values determining the ensemble data and $\nu _{\text{turb}}$ is the turbulent viscosity parameter. The modeling problem becomes one of determining the scalar $\nu_{\text{turb}}$ in terms of the flow variables. Ensemble data can be used for model parameterization, as proposed already in 2002 by Carati, Rogers and Wray \cite{CRW02} and developed starting with \cite{JL14}, \cite{JLM20}.\\

The ensemble mean\textbf{\ }$\left\langle u\right\rangle _{e}$\textbf{,}
fluctuation\textbf{\ }$u_{j}^{\prime }$, its magnitude $|u^{\prime }|_{e}$,
induced turbulent kinetic energy (TKE) density $k^{\prime }$\ and, turbulence length scale are%
\begin{align*}
    \text{\textbf{ensemble average} }u 
    &:  
    \left\langle u\right\rangle_{e}(x,t)
     = \frac{1}{J}\sum_{j=1}^{J}u(x,t;\omega _{j}), \\ 
    \text{\textbf{fluctuation}} 
    &: u^{\prime }(x,t;\omega _{j})
     =  u(x,t;\omega_{j})-\left\langle u\right\rangle _{e}(x,t) \\ 
    \text{\textbf{fluctuation magnitude}}
    &:  |u^{\prime }|_{e}^{2}(x,t)
    =  \frac{1}{J}\sum_{j=1}^{J}|u^{\prime }(x, t;\omega_j)|^{2}, \\ \text{\textbf{turbulent kinetic energy}}
    &:  k^{\prime }(x,t)=  \frac{1}{2}\text{ }|u^{\prime }|_{e}^{2}(x,t) \\ 
    \text{\textbf{turbulence length scale}} 
    &: l(x,t) \quad \text{to be specified}%
\end{align*}

Fluctuations' effects on the mean flow were envisioned by Boussinesq \cite%
{B77} as a mixing or dissipative process; see \cite{L14}, \cite{LL16}, \cite%
{JLM20} for proofs of the correctness of his vision. Consistently, $\nu_{\text{turb}}(\cdot )$ should depend on a dimensionally consistent combination of
a local length scale $l$ and the turbulent kinetic energy density $k^{\prime
}$ and should increases with $k^{\prime }$. The resulting Kolmogorov-Prandtl
relation, e.g. Pope \cite{Pope}, gives for $\nu_{\text{turb}}\mathbf{(\cdot ),}$%
\footnote{%
With $l$ the wall normal distance, Pope \cite{Pope} deduces $\mu =0.55$ from
the law of the wall. With $l=|u^{\prime }|\tau $ both this derivation and
the classic idea of Lilly \cite{L67} do not directly apply to determine $\mu $.}%
\begin{equation*}
\nu_{\text{turb}}\mathbf{(\cdot )}\text{ }=\mu l\sqrt{k^{\prime }}\text{.}
\end{equation*}%
With an ensemble eddy viscosity (EEV) model, the TKE density $k^{\prime
}(x,t)$\ is directly calculated (not modelled as in RANS and URANS
approaches, e.g., Wilcox \cite{Wilcox}).

The \textit{turbulence length scale} $l(x,t)$\ plays a role similar to the
mean free pass in kinetic theory. It represents a distance fluctuations must
travel to interact with each other or similarly the distance fluctuations
travel in one (small) time unit $\tau $. The motivating approach to the
turbulence length scale herein (the distance a fluctuation travels in one
time unit) was developed by Kolmogorov \cite{K42} (see also Spalding \cite%
{S91} for an interesting historical perspective), mentioned by Prandtl \cite%
{P26} and is still of practical use, e.g., Teixeira and Cheinet \cite{T01}.
It is motivated by the use of a turbulence model for under-resolved
simulations with a small time scale $\tau $ (related to the time step). This
choice leads to the local length scale and eddy viscosity 
\begin{equation}
l(x,t)=|u^{\prime }|_{e}\tau \quad \text{ yielding } \quad \nu_{\text{turb}}\mathbf{(}x,t%
\mathbf{)}\text{ }=\mu |u^{\prime }|_{e}^{2}(x,t)\tau .
\label{EQ:lengthScale}
\end{equation}%
As $|u^{\prime }|_{e}^{2}$\ is independent of $\omega _{j}$\ the turbulent
viscosity\ will be the same for all realizations (i.e., independent of $%
\omega _{j}$). This reduces the computational cost of solving for an
ensemble of model solutions with ensemble algorithms developed starting with 
\cite{JL13}.

EV models  like \eqref{eq:EEVmodel} have two main failure modes. The\textit{\ lesser} one (not
analyzed herein) is that eddy viscosity cannot account for intermittent
energy flow from fluctuations back to the mean velocity. The \textit{primary}
failure mode is \textit{over-dissipation} of solutions leading to a lower
Reynolds number flow, e.g., Sagaut \cite{Sagaut}, Wilcox \cite{Wilcox}.
Since the TKE is directly calculated, it is hoped EEV, (\ref{eq:EEVmodel})
above, has energy dissipation rate comparable (uniformly in the Reynolds
number) to the energy input rate, $\mathcal{O}(U^{3}/L)$. To analyze model
dissipation away from walls we consider (\ref{eq:EEVmodel}) subject to
periodic boundary conditions: on $\Omega =(0,L_{\Omega })^{3}$, for $\,\phi
=u,\,$ $u^{0},\,f,\,p,$ 
\begin{equation}
\phi (x+L_{\Omega }e_{j},t)=\phi (x,t)\quad j=1,2,3\text{ and}\int_{\Omega
}\phi\, dx=0\,.  \label{eq:PeriodicBCs}
\end{equation}%

Section \ref{sec_notaion_and_prelim} defines needed parameters and gives \'{a} priori bounds on model
solutions. Section \ref{sec_model_existence} (briefly) gives an of existence proof for model
solutions. The energy input rate, (\ref{EQenergyInput}) of Section \ref{Energy Dissipation and Turbulence Phenomenology}
below, is $\mathcal{O}(U^{3}/L)$. The main result herein, Theorem \ref{main_theorem}, is that the EEV model (\ref{eq:EEVmodel}) does not
over-dissipate. Specifically, we prove that,
uniformly in the Reynolds number,%
\begin{equation*}
\text{time average}\left( \frac{1}{|\Omega |}\int_{\Omega }[\nu +\nu_{\text{turb}}]|\nabla u|^{2}\,dx\right) \leq C\frac{U^{3}}{L}.
\end{equation*}

The main assumption of Theorem \ref{main_theorem} is that model solutions satisfy
an energy inequality. To explain this assumption, taking the $L^{2}(\Omega )$
inner product of (\ref{eq:EEVmodel}) with $u$ shows that a sufficiently
regular realization satisfies the energy equality%

\begin{align}
\label{energy_equality}
    \frac{1}{2}\int_{\Omega }|u(x,
    & T;\omega _{j})|^{2}\,
     dx
    +\int_{0}   ^{T}\int_{\Omega }\nu |\nabla u(x,t;\omega _{j})|^{2}+\mu       \tau
    |u^{\prime }(x,t)|_{e}^{2}|\nabla u(x,t;\omega _{j})|^{2}\,dx\,dt \notag \\
     &=\frac{1}{2}\int_{\Omega }|u^{0}(x;\omega
    _{j})|^{2}\,dx+\int_{0}^{T}\int_{\Omega }f(x,t;\omega _{j})\cdot u(x,t;\omega
    _{j})\,dx\,dt.
\end{align}

The existence theory for this model contains many open problems, Section \ref{sec_model_existence}.
Herein, we shall assume that data $u^{0},f\in L^{2}(\Omega )$ is smooth and divergence free. We treat weak solutions that satisfy the energy \textit{in}equality
consistent with \eqref{energy_equality}.
\begin{align}
\label{EQEnergyIneq}
     \frac{1}{2}\int_{\Omega }|u(x,
     &T;\omega _{j})|^{2}\,dx
    +\int_{0}^{T}\int_{\Omega }\nu |\nabla u(x,t;\omega _{j})|^{2}+\mu \tau
    |u^{\prime }(x,t)|_{e}^{2}|\nabla u(x,t;\omega _{j})|^{2}\,dx\,dt \notag \\ 
    &\leq \frac{1}{2}\int_{\Omega }|u^{0}(x;\omega_{j})|^{2}\,dx+\int_{0}^{T}\int_{\Omega }f(x,t;\omega _{j})\cdot u(x,t;\omega_{j})\,dx\,dt, 
\end{align}
and thus, 
\begin{align}
\label{EQEnergynew}
 &\frac{1}{2} \left\langle \frac{1}{|\Omega |} 
 \int_{\Omega }|  u(x,T;\omega_{j})|^{2}\,dx\right\rangle _{e} \\
 &+\int_{0}^{T}\left\langle \frac{1}{|\Omega |}\int_{\Omega }
 \left(\nu 
 +\mu \tau |u^{\prime }(x,t)|_{e}^{2}|\right)
 \nabla u(x,t;\omega _{j})|^{2}\,dx\right\rangle _{e}dt \notag\\ 
&\leq \frac{1}{2}\left\langle \frac{1}{|\Omega |} \int_{\Omega
}|u^{0}(x;\omega _{j})|^{2}\,dx\right\rangle _{e}+\int_{0}^{T}\left\langle 
\frac{1}{|\Omega |}\int_{\Omega }f(x,t;\omega _{j})\cdot u(x,t;\omega
_{j})\,dx\,\right\rangle _{e}dt \notag.
\end{align}

\subsection{Energy Dissipation and Turbulence Phenomenology\label{Energy Dissipation and Turbulence Phenomenology}}

The energy dissipation rate is critical in lift, drag and the Kolmogorov
theory of turbulence. We briefly review (from e.g. Pope \cite{Pope},
Davidson \cite{D15}, see Lewandowski \cite{L16} for what can be proven
directly from the Navier-Stokes equations) this last connection. It is known
from flow data that energy is concentrated in large scales and energy
dissipation occurs primarily at small scales. Consistently, the K41 theory
posits that a smooth body force inputs energy into a flow's large scales and
non-negligible energy dissipation occurs only at small scales. Let $U,L$
denote a large scale velocity magnitude and a large length scale. Kinetic
energy then scales like $U^{2}.$ The large scale turn over time $T^{\ast }$
is determined by $L=UT^{\ast }$. Thus, the rate (\textit{per unit time}) of
energy input at the large scales is 
\begin{equation}
\text{energy input rate} \simeq \frac{U^{2}}{T^{\ast }}=\frac{U^{3}}{L}%
.  \label{EQenergyInput}
\end{equation}%
Let, as usual, $\eta $\ denote the Kolmogorov microscale and $v_{\text{small}}$ the
associated velocity scale. Form two Reynolds numbers for the large and small
scales%
\begin{equation*}
\mathcal{R}e=\frac{LU}{\nu }\text{ and }\mathcal{R}e_{\text{small}}=\frac{\eta
v_{\text{small}}}{\nu }.
\end{equation*}%
The Kolmogorov microscale is determined by two postulates:

\begin{equation*}
\begin{array}{cc}
{\left\vert \frac{\text{\normalsize viscous term}}{\text{\normalsize nonlinear term}}\right\vert} _{%
\text{small scales}}\text{ }\simeq {\Large 1} & \frac{{\Large \eta v}_{%
{\Large \text{small}}}}{{\Large \nu }}{\Large \simeq 1} \\ 
\left. 
\begin{array}{c}
\text{Energy input rate} \\ 
\text{ at large scales}%
\end{array}%
\right. \simeq \text{ }\left. 
\begin{array}{c}
\text{Energy dissipation } \\ 
\text{rate at small scales}%
\end{array}%
\right.  & 
\begin{array}{c}
\frac{{\Large U}^{{\Large 3}}}{{\Large L}}\simeq \nu \left( \frac{{\Large v}%
_{{\Large \text{small}}}}{{\Large \eta }}\right) ^{2}\text{ } \\ 
\text{as }\nu |\nabla v_{\text{small}}|^{2}\simeq \nu \left( \frac{{\Large v}_{%
{\Large \text{small}}}}{{\Large \eta }}\right) ^{2}%
\end{array}%
\end{array}%
\end{equation*}%
These two postulates yield two equations for the two unknowns $\eta
,v_{\text{small}}$. Solving yields Kolmogorov's estimate of the smallest persistent
structure on a turbulent flow%
\begin{equation*}
\eta \simeq \mathcal{R}e^{-3/4}L.
\end{equation*}%
Matching energy dissipation rates (primarily at small scales) to the $U^{3}/L
$\ energy input rate (primarily at large scales) is a fundamental organizing
principle for turbulent flow statistics.

\subsection{Related work}

We cannot over-stress the importance of the NSE work of Constantin, Doering
and Foias \cite{CD92}, \cite{DF02} to the analysis herein. Their work has
been developed in many important directions (such as for shear flows in
general domains, Wang \cite{Wang97} and the analysis in Chow and Pakzad \cite%
{CP20} of when the upper bound is attained) for the NSE in subsequent years.
For \textit{turbulence models, upper bounds for energy dissipation rates\
have particular importance.} They yield model conditions that exclude the
over-dissipation failure mode. This application of the
Constantin-Doering-Foias theory has yielded useful results already for
several turbulence models, e.g., \cite{KLS22}, \cite{LM18}, \cite{L02}, \cite%
{LS20}, \cite{La16}.

For eddy viscosity models, Escudier \cite{E66} proposed in 1966 capping the
turbulent viscosity $\nu_{\text{turb}}\mathbf{(\cdot )}$, an idea that re-emerges
naturally in Borggaard, Iliescu and Roop \cite{BIR09}. It was proven in \cite%
{KLS21}\ that Escudier's uniform upper bound on $\nu_{\text{turb}}\mathbf{(\cdot )}
$ suffices to control model diffusion quite generally. In the EEV model
above $\nu_{\text{turb}}\mathbf{(}x,t\mathbf{)}$ can be unbounded for weak
solutions. Yet analysis herein still can prove a useful bound on model
energy dissipation rates. The existence theory in Section \ref{sec_model_existence} requires only
the bound $l\leq diameter(\Omega )$\ and not boundedness of $\nu _{\text{turb}}$.

\section{Notation and preliminaries}
\label{sec_notaion_and_prelim}

Our notation is standard and follows, e.g., Doering and Gibbon \cite{DG95}.
The flow domain is the open box $\Omega =(0,L_{\Omega })^{3}$ in $\mathbb{R}%
^{3}$. The $L^{2}(\Omega )$ norm and the inner product are $\Vert \cdot
\Vert $ and $(\cdot ,\cdot )$. Likewise, the $L^{p}(\Omega )$ norms and the
Sobolev $W_{p}^{k}(\Omega )$ norms are $\Vert \cdot \Vert _{L^{p}}$ and $%
\Vert \cdot \Vert _{W_{p}^{k}}$ respectively. $H^{k}(\Omega )$ is the
Sobolev space $W_{2}^{k}(\Omega )$, with norm $\Vert \cdot \Vert _{H^{k}}$. $%
C$ represents a generic positive constant independent of $\nu ,U,L$ and
other model parameters. Its value may vary from situation to situation.

Three kinds of averaging will be used, ensemble $\left\langle \cdot
\right\rangle _{e}$ , finite time $\left\langle \cdot \right\rangle _{T}$
and long time $\left\langle \cdot \right\rangle _{\infty }$. Ensemble
averaging, introduced above, is $\left\langle \phi \right\rangle _{e}:=\frac{%
1}{J}\sum_{j=1}^{J}\phi (\omega _{j})$. The two time averages are%
\begin{equation*}
\left\langle \phi \right\rangle _{T}=\frac{1}{T}\int_{0}^{T}\phi (t)dt\text{
and\ }\left\langle \phi \right\rangle _{\infty }=\limsup_{T\rightarrow
\infty }\left\langle \phi \right\rangle _{T}.
\end{equation*}%

These satisfy%
\begin{equation*}
\left\langle \phi \psi \right\rangle \leq \left\langle |\phi
|^{2}\right\rangle ^{1/2}\left\langle |\psi |^{2}\right\rangle
^{1/2},\left\langle \phi ^{\prime }\right\rangle _{e}=0,\text{ and }%
\left\langle \left\langle \phi \right\rangle _{e}\right\rangle
_{T}=\left\langle \left\langle \phi \right\rangle _{T}\right\rangle _{e}.
\end{equation*}%
We recall that uniform in $T$ bounds on the following quantities also follow
from the energy inequality (\ref{EQEnergyIneq}) and standard differential
inequalities.

\begin{proposition}[Uniform Bounds]
Consider the model (\ref{eq:EEVmodel}) with boundary conditions (\ref%
{eq:PeriodicBCs}) and $l(x,t)=|u^{\prime }|_{e}\tau $. For a weak solution
satisfying (\ref{EQEnergyIneq})\ the following are uniformly bounded in $T$
\begin{gather}
\label{EQ:bounds}
||u(T)||^{2}, \quad
\int_{\Omega }\,\nu _{\text{turb}}(\cdot ,T)\,dx, \quad
\frac{1}{T} \int_{0}^{T}\left( \int_{\Omega }|\,\nabla {u}|^{2}\,dx\right) \,dt,
 \quad||u^{\prime }(T)||^{2},   \notag\\
\quad\text{and } \quad\frac{1}{T}\int_{0}^{T}\left( \int_{\Omega }[\nu +\,\nu
_{\text{turb}}]|\,\nabla {u}|^{2}\,dx\right) \,dt.  \notag
\end{gather}
\end{proposition}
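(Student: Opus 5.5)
The plan is to work from the energy inequality (\ref{EQEnergyIneq}) for each realization, in its differential form, and then average over the ensemble. Formally, for a.e. $t$,
\begin{equation*}
\frac{1}{2}\frac{d}{dt}\|u(t;\omega_j)\|^{2}+\int_{\Omega}(\nu+\nu_{\text{turb}})|\nabla u(t;\omega_j)|^{2}\,dx\le (f(\omega_j),u(t;\omega_j)).
\end{equation*}
For weak solutions I will use the standard version of this inequality on subintervals $[s,t]$, or equivalently work with the integrated form and a Gronwall-type argument in integral form. Because of the periodicity and zero-mean condition (\ref{eq:PeriodicBCs}), the Poincar\'e inequality $\|u\|\le C_P L_\Omega\|\nabla u\|$ holds. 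Since $\nu_{\text{turb}}\ge 0$, I drop it in the first step. Bounding the forcing by Cauchy--Schwarz, Poincar\'e and Young gives
\begin{equation*}
(f,u)\le \frac{\nu}{2}\|\nabla u\|^{2}+\frac{C_P^2L_\Omega^2}{2\nu}\|f\|^{2},
\end{equation*}
and therefore
\begin{equation*}
\frac{d}{dt}\|u\|^{2}+\frac{\nu}{C_P^{2}L_\Omega^{2}}\|u\|^{2}\le \frac{C_P^{2}L_\Omega^{2}}{\nu}\|f\|^{2}.
\end{equation*}
With an integrating factor, this yields
\begin{equation*}
\|u(T)\|^{2}\le \|u^{0}\|^{2}+\frac{C_P^4L_\Omega^4}{\nu^{2}}\sup_t\|f\|^{2},
\end{equation*}
uniformly in $T$, for each $j$. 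This is the step where the weak-solution setting needs care: the inequality is only available in integrated form on $[0,T]$, so I will either assume the strong energy inequality on a.e. subinterval or run the comparison argument directly on the integral inequality. I expect this to be the main technical obstacle, though it is a standard one.

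The remaining quantities are built from this. The fluctuation $u'(T;\omega_j)=u(\omega_j)-\langle u\rangle_e$ satisfies
\begin{equation*}
\|u'(\omega_j)\|^{2}\le 2\|u(\omega_j)\|^{2}+2\|\langle u\rangle_e\|^{2}\le 4\max_j\|u(\omega_j)\|^{2},
\end{equation*}
so it is bounded uniformly in $T$. Averaging over $j$ gives
\begin{equation*}
\int_\Omega \nu_{\text{turb}}(\cdot,T)\,dx=\mu\tau\int_\Omega|u'|_e^{2}\,dx=\mu\tau\langle\|u'(T)\|^{2}\rangle_e,
\end{equation*}
which is also uniformly bounded.

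For the time averages I return to (\ref{EQEnergyIneq}), now keeping the dissipation term. I apply the same Young bound to the forcing, with the $\frac{\nu}{2}\|\nabla u\|^2$ absorbed on the left, and divide by $T$:
\begin{equation*}
\frac{1}{T}\int_0^T\int_\Omega\Big(\frac{\nu}{2}+\nu_{\text{turb}}\Big)|\nabla u|^{2}\,dx\,dt\le \frac{\|u^{0}\|^{2}}{2T}+\frac{C_P^2L_\Omega^2}{2\nu}\sup_t\|f\|^{2}.
\end{equation*}
The right side is bounded uniformly for $T\ge1$, which bounds both $\frac1T\int_0^T\|\nabla u\|^2\,dt$ (after dividing by $\nu/2$) and the full model dissipation time average. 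For $T<1$ the boundedness follows simply from the finite-time energy inequality.

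All the constants depend on $\nu$, $L_\Omega$ and the data, which is acceptable here because the proposition asserts boundedness in $T$ only, not uniformity in $Re$.
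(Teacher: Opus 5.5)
Your argument is the paper's argument made explicit. The paper only says the bounds follow from (\ref{EQEnergyIneq}) and ``standard differential inequalities,'' which is your Poincar\'e--Young--Gronwall step. The bounds on $\|u'(T)\|^2$ and on $\int_\Omega \nu_{\text{turb}}(\cdot,T)\,dx=\mu\tau\langle\|u'(T)\|^2\rangle_e$ then follow as you say. Your time-averaged dissipation bounds correctly use only (\ref{EQEnergyIneq}) itself.

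One correction: of your two ways of handling weak solutions, only the first works. The inequality integrated from $t=0$ alone gives at best $\|u(T)\|^2\le\|u^0\|^2+\frac{C_P^2L_\Omega^2}{\nu}\|f\|^2\,T$. Nothing in that single inequality rules out a short-lived late spike in $\|u\|^2$, and Gronwall cannot be restarted without inequalities on subintervals. So the pointwise-in-$T$ bounds on $\|u(T)\|^2$, $\|u'(T)\|^2$ and $\int_\Omega\nu_{\text{turb}}(\cdot,T)\,dx$ really require the strong energy inequality on $[s,t]$ for a.e.\ $s$, which the paper's appeal to differential inequalities also tacitly assumes.

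Your remark about $T<1$ is not right as stated: the energy inequality bounds $\int_0^T$, not $\frac1T\int_0^T$, as $T\to 0$. This does not matter, because all the scales are defined through $\limsup_{T\to\infty}$.
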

To develop the results for energy dissipation rates some scaling constants are needed. The setting
considered is that for smooth initial condition and body force. Over long
enough time turbulence will develop so the standard scaling parameters are
defined through infinite time limits as in the Constantin-Doering-Foias
theory \cite{CD92}, \cite{DF02}. Since energy is input at the large scales
by the smooth body force $f(x;\omega _{j})$, the large length scale $L$, \eqref{eq:ULscales} below, must involve both the domain size ($L_{\Omega }$) and the length scales where $%
f(\cdot )$ inputs energy.

\begin{definition}
\label{definition of scales}
The scales of the body force $F$, velocity $U$, fluctuation scale $U^{\prime
}$, length scale $L$ are then%
\begin{align*} 
 F
    &=\left\langle \frac{1}{|\Omega |}||f||^{2}\right\rangle _{e}^{\frac{1}{2}},\\ 
    U_T 
    & = \left\langle \left\langle \frac{1}{|\Omega |}%
    ||u||^{2}\right\rangle _{e}\right\rangle _{T}^{\frac{1}{2}},\\  
    U
    &=\left\langle \left\langle \frac{1}{|\Omega |}||u||^{2}\right\rangle_{e}\right\rangle _{\infty }^{\frac{1}{2}}, \\ 
    U_T^{\prime }
        &=\left\langle \left\langle \frac{1}{|\Omega |}||u^{\prime}||^{2}\right\rangle _{e}\right\rangle _{T }^{\frac{1}{2}} \\
    U^{\prime }
        &=\left\langle \left\langle \frac{1}{|\Omega |}||u^{\prime}||^{2}\right\rangle _{e}\right\rangle _{\infty }^{\frac{1}{2}} \\
    \label{eq:ULscales}
    L
    &=\min \left( L_{\Omega },\frac{F}{\max_{j}
    ||\nabla f(\cdot ;\omega_{j})||_{L^{\infty }}},\frac{F}{\left\langle \frac{1}{|\Omega |}
    ||\nabla f||^{2}\right\rangle _{e}^{\frac{1}{2}}}\right) 
\end{align*}%
The large scale turnover time $T^{\ast }$, turbulence intensity $I(u)$\ and
Reynolds number $\mathcal{R}e$ are%
\begin{equation*}
T^{\ast }=\frac{L}{U}\text{ , \ }I(u)=\left( \frac{U^{\prime }}{U}\right)
^{2}\text{\ and }\mathcal{R}e=\frac{LU}{\nu }.
\end{equation*}
\end{definition}

The uniform bounds in Proposition (\ref{EQ:bounds}) immediately imply that
the quantities in Definition \ref{definition of scales} (defined through limit superiors) are well
defined and finite. The turbulent intensity satisfies $I(u)\leq 1$ since $%
U^{\prime }\leq U$.
The large length scale $L$ has units of length and
satisfies%
\begin{equation}
\max_{j}||\nabla f||_{L^{\infty }}\leq \frac{F}{L}\text{ and }\left\langle 
\frac{1}{|\Omega |}||\nabla f||^{2}\right\rangle _{e}\leq \frac{F^{2}}{L^{2}}%
\text{ \ }.  \label{eq:FandLproperties}
\end{equation}

\subsection{The energy dissipation rate}

The problem data $u^{0}(x;\omega _{j}),f(x;\omega _{j})$\ are assumed
smooth, periodic, mean-zero and satisfy $\nabla \cdot u^{0}=0$ and $\nabla
\cdot f=0$. The model's ensemble averaged energy dissipation rate, from the
energy inequality (\ref{EQEnergyIneq}), is%
\begin{equation*}
\varepsilon (t):=\left\langle \frac{1}{|\Omega |}\int_{\Omega }\nu |\nabla
u(x,t;\omega _{j})|^{2}+\mu \tau |u^{\prime }(x,t)|_{e}^{2}|\nabla
u(x,t;\omega _{j})|^{2}\,dx\right\rangle _{e}
\end{equation*}%
It will be convenient to decompose the energy dissipation rate by $%
\varepsilon =\varepsilon _{\text{viscous}}+\varepsilon _{\text{turb}}$ where 
\begin{align*}
    \varepsilon _{\text{viscous}}
    &=\left\langle \frac{1}{|\Omega |}\int_{\Omega }\nu
    |\nabla u(x,t;\omega _{j})|^{2}\,dx\right\rangle _{e}, \\ 
    \varepsilon _{\text{turb}}
    &=\left\langle \frac{1}{|\Omega |}\int_{\Omega }\mu \tau
    |u^{\prime }(x,t;\omega _{j})|_{e}^{2}|\nabla u(x,t;\omega
_{j})|^{2}\,dx\right\rangle _{e}.%
\end{align*}%
We also define the terms $\left\langle \varepsilon \right\rangle _{T} $ and $\left\langle \varepsilon \right\rangle _{\infty}$ as follows: 
\begin{align*}
       \left\langle \varepsilon \right\rangle _{T} 
       &:= \frac{1}{T}\int_{0}^{T}\varepsilon _{\text{viscous}}+\varepsilon _{\text{turb}} \, dt \\
       \left\langle \varepsilon \right\rangle _{\infty}
       & : =\limsup_{T\rightarrow\infty} \left\langle \varepsilon \right\rangle _{T} 
\end{align*}
Since $\nu _{\text{turb}}$\ is independent of $\omega _{j}$, $\varepsilon _{\text{turb}}$
can be split into a term responding to the mean velocity and one solely
depending on the average fluctuation.
\begin{proposition}
We have%
\begin{equation*}
\varepsilon _{\text{turb}}=\frac{1}{|\Omega |}\int_{\Omega }\mu \tau |u^{\prime
}|_{e}^{2}|\nabla \left\langle u\right\rangle _{e}|^{2}\,dx+\frac{1}{|\Omega |}%
\int_{\Omega }\mu \tau |u^{\prime }|_{e}^{2}|\nabla u^{\prime }|_{e}^{2}\,dx.
\end{equation*}
\end{proposition}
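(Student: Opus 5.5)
The identity is pointwise algebra in $(x,t)$, followed by integration over $\Omega$. It rests on two facts: $\nu_{\text{turb}}=\mu\tau|u'|_e^2$ does not depend on $\omega_j$, and fluctuations have zero ensemble mean. The only slightly delicate point is the notation: $|\nabla u'|_e^2$ must be read, by analogy with the definition of $|u'|_e^2$, as $\frac1J\sum_j|\nabla u'(x,t;\omega_j)|^2$. I will state this reading explicitly.

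First, since $\mu\tau|u'|_e^2(x,t)$ is the same for every realization, it passes through the ensemble average. Linearity of $\langle\cdot\rangle_e$ and of the integral then gives
\begin{equation*}
\varepsilon_{\text{turb}}=\frac{1}{|\Omega|}\int_\Omega \mu\tau|u'|_e^2\,\big\langle|\nabla u(x,t;\omega_j)|^2\big\rangle_e\,dx .
\end{equation*}
Swapping the finite sum over $j$ with the integral is trivial, since there are finitely many terms and each is integrable (for weak solutions one may restrict to a.e. $t$).

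Second, I decompose $\nabla u_j=\nabla\langle u\rangle_e+\nabla u_j'$ and expand the square using the Frobenius inner product:
\begin{equation*}
|\nabla u_j|^2=|\nabla\langle u\rangle_e|^2+2\,\nabla\langle u\rangle_e:\nabla u_j'+|\nabla u_j'|^2 .
\end{equation*}
Taking the ensemble average, the first term is independent of $j$. The cross term is $2\nabla\langle u\rangle_e:\langle\nabla u'\rangle_e$, and $\langle\nabla u'\rangle_e=\nabla\langle u'\rangle_e=0$ because the gradient commutes with the finite average and $\langle\phi'\rangle_e=0$. The last term is $|\nabla u'|_e^2$. This gives the Pythagorean identity $\langle|\nabla u|^2\rangle_e=|\nabla\langle u\rangle_e|^2+|\nabla u'|_e^2$ pointwise.

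Substituting this into the first display and splitting the integral yields the two terms claimed in the proposition. There is no real obstacle beyond noting that the cross term vanishes pointwise, not merely after integration. That pointwise vanishing is essential, because the cross term is multiplied by the nonconstant weight $\nu_{\text{turb}}(x,t)$ before integration.
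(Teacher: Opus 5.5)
Your proof is correct and follows the paper's argument. The paper's one-line proof is the pointwise identity $\langle|\nabla u|^2\rangle_e=|\nabla\langle u\rangle_e|^2+|\nabla u'|_e^2$, combined with pulling the $\omega_j$-independent weight $\mu\tau|u'|_e^2$ through the ensemble average; you supply the details it leaves implicit, namely that the cross term vanishes pointwise and why pointwise vanishing is what the variable weight requires.
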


\begin{proof}
This follows as $\left\langle |\nabla u(x,t;\omega _{j})|^{2}\right\rangle
_{e}=|\nabla \left\langle u(x,t;\omega _{j})\right\rangle _{e}|^{2}+|\nabla
u^{\prime }(x,t;\omega _{j})|_{e}^{2}$.
\end{proof}
We also define the terms $\varepsilon_T$ and $\varepsilon_\infty$ as follows: 
\begin{align*}
       \left\langle \varepsilon \right\rangle _{T} 
       &:= \frac{1}{T}\int_{0}^{T}\varepsilon _{\text{viscous}}+\varepsilon _{\text{turb}} \, dt \\
       \left\langle \varepsilon \right\rangle _{\infty}
       & : =\limsup_{T\rightarrow\infty} \left\langle \varepsilon \right\rangle _{T} 
\end{align*}

\section{Estimation of EEV energy dissipation rates}

The estimate below in the main theorem, $\left\langle \varepsilon
\right\rangle _{\infty }\lesssim $\ $U^{3}/L,$ is consistent as $%
\mathcal{R}e\rightarrow \infty $ and as $\tau \rightarrow 0$\ with both phenomenology,
surveyed in Section 1.1, see Pope \cite{Pope} for elaboration, and the rate
proven for the Navier-Stokes equations in \cite{CD92}, \cite{DF02}.

\begin{theorem}
\label{main_theorem}
Suppose $u^{0}(x;\omega _{j}),f(x;\omega _{j})$\ are smooth, periodic,
mean-zero $L^{2}(\Omega )$ functions satisfying satisfy $\nabla \cdot u^{0}=$
$\nabla \cdot f=0$. Let $u(x,t;\omega _{j})$ be a weak solution of (\ref%
{eq:EEVmodel}) satisfying the energy inequality. The time and ensemble
averaged rate of energy dissipation satisfies the following. For any $%
0<\alpha <1,$%
\begin{equation*}
\left\langle \varepsilon \right\rangle _{\infty }\leq \left( \frac{1}{%
1-\alpha }+\frac{1}{4\alpha (1-\alpha )}\mathcal{R}e^{-1}+\frac{\mu }{%
4\alpha (1-\alpha )}\frac{\tau }{T^{\ast }}I(u)\right) \frac{U^{3}}{L}.
\end{equation*}%
Thus, for $\alpha =1/2,\mu =0.55$%
\begin{equation*}
\left\langle \varepsilon \right\rangle _{\infty }\leq \left( 2+\mathcal{R}%
e^{-1}+0.55\frac{\tau }{T^{\ast }}I(u)\right) \frac{U^{3}}{L}.
\end{equation*}
\end{theorem}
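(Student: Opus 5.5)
We follow the Constantin--Doering--Foias approach. There are two ingredients: the ensemble-averaged energy inequality (\ref{EQEnergynew}), which bounds $\langle\varepsilon\rangle_T$ by the work done by $f$, and an estimate of $F$ obtained by testing the momentum equation against $f$ itself.

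\textbf{Step 1 (energy inequality).} Divide (\ref{EQEnergynew}) by $T$ and drop the nonnegative kinetic energy term at time $T$. This gives
\[
\langle\varepsilon\rangle_T\le \frac{1}{2T}\Big\langle\tfrac{1}{|\Omega|}\|u^0\|^2\Big\rangle_e+\Big\langle\Big\langle\tfrac{1}{|\Omega|}(f,u)\Big\rangle_e\Big\rangle_T\le \mathcal{O}(1/T)+F\,U_T ,
\]
where the last bound uses Cauchy--Schwarz in space, ensemble and time.

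\textbf{Step 2 (bound on $F$).} Take the inner product of (\ref{eq:EEVmodel}) with $f(x;\omega_j)$. Since $f$ is time independent and divergence free, the pressure term vanishes. Integrate by parts in the nonlinear term, $(u\cdot\nabla u,f)=-(u\cdot\nabla f,u)$, and in the two diffusion terms. Then ensemble-average, time-average over $[0,T]$, and divide by $|\Omega|$:
\[
F^2=\frac{1}{T}\Big\langle\tfrac{(u(T)-u^0,f)}{|\Omega|}\Big\rangle_e-\Big\langle\Big\langle\tfrac{(u\cdot\nabla f,u)}{|\Omega|}\Big\rangle_e\Big\rangle_T+\Big\langle\Big\langle\tfrac{1}{|\Omega|}\int(\nu+\mu\tau|u'|_e^2)\nabla u:\nabla f\Big\rangle_e\Big\rangle_T .
\]
The first term is $\mathcal{O}(1/T)$ by the uniform bounds of the Proposition. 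The nonlinear term is at most $\max_j\|\nabla f\|_{L^\infty}U_T^2\le (F/L)U_T^2$ by (\ref{eq:FandLproperties}).

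For the viscous term we use Cauchy--Schwarz together with (\ref{eq:FandLproperties}) and Young's inequality:
\[
\nu\langle\langle\|\nabla u\|\,\|\nabla f\|\rangle\rangle\le \nu^{1/2}\langle\varepsilon_{\text{viscous}}\rangle_T^{1/2}\,\frac{F}{L}\cdot\nu^{1/2}\le \alpha\langle\varepsilon_{\text{viscous}}\rangle_T\frac{1}{U_T}\cdot(\ldots).
\]
The Young's weights are to be chosen so that, after multiplying by $U_T/F$, we get $\alpha\langle\varepsilon\rangle_T+\frac{1}{4\alpha}\frac{\nu U_T^2}{L^2}$.

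\textbf{Main obstacle: the turbulent term.} Here $\nu_{\text{turb}}$ may be unbounded, so $\nabla f$ must carry the $L^\infty$ norm. We write $\mu\tau|u'|_e^2|\nabla u||\nabla f|=(\mu\tau|u'|_e^2)^{1/2}|\nabla u|\cdot(\mu\tau|u'|_e^2)^{1/2}|\nabla f|$. Cauchy--Schwarz then gives
\[
\le \langle\varepsilon_{\text{turb}}\rangle_T^{1/2}\Big(\mu\tau\,\tfrac{F^2}{L^2}\,U_T'^2\Big)^{1/2},
\]
using $\int|u'|_e^2\,dx/|\Omega|=\langle\|u'\|^2\rangle_e/|\Omega|$ and $|\nabla f|\le F/L$ pointwise. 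Young's inequality with weight $\alpha$ then yields $\alpha\frac{F}{U_T}\langle\varepsilon_{\text{turb}}\rangle_T+\frac{1}{4\alpha}\frac{\mu\tau U_T'^2F}{L^2}U_T$. (The weights are arranged so that multiplying by $U_T/F$ gives the stated form.)

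\textbf{Step 3 (combine).} Collecting the pieces:
\[
F U_T\le \frac{U_T^3}{L}+\alpha\langle\varepsilon\rangle_T+\frac{1}{4\alpha}\frac{\nu U_T^2}{L^2}U_T+\frac{1}{4\alpha}\frac{\mu\tau U_T'^2U_T}{L^2}\cdot U_T+\mathcal{O}(1/T).
\]
Insert this into Step 1, move $\alpha\langle\varepsilon\rangle_T$ to the left, and divide by $1-\alpha$. Finally take $\limsup_{T\to\infty}$, using $U_T\to U$ and $U'_T\to U'$ along a subsequence realizing the limsup, and the boundedness of the limsup quantities from the Proposition. Rewriting $\nu U^3/(L^2U)=\mathcal{R}e^{-1}U^3/L$ and $\mu\tau U'^2U/L^2=\mu(\tau/T^*)I(u)U^3/L$ gives the claimed bound. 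Setting $\alpha=1/2$ makes $1/(4\alpha(1-\alpha))=1$ and gives the second display.
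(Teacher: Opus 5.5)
Your proposal is correct and follows the paper's proof. The energy inequality gives $\langle\varepsilon\rangle_T\le\mathcal{O}(1/T)+FU_T$, testing the equation with $f$ bounds $F$, and Cauchy--Schwarz plus Young's inequality (your $\alpha$ is the paper's $\beta/2$) with $\|\nabla f\|_{L^\infty}\le F/L$ carrying the possibly unbounded $\nu_{\text{turb}}$ lets you absorb $\alpha\langle\varepsilon\rangle_T$ and pass to the limit superior.

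Only the bookkeeping needs cleaning:
\begin{itemize}
\item The viscous Cauchy--Schwarz bound is $\nu^{1/2}\langle\varepsilon_{\text{viscous}}\rangle_T^{1/2}F/L$, with no extra $\nu^{1/2}$.
\item The viscous term in Step 3 is $\frac{1}{4\alpha}\frac{\nu U_T^2}{L^2}$, with no extra $U_T$.
\item The final identity should read $\mu\tau U'^2U^2/L^2=\mu(\tau/T^{\ast})I(u)U^3/L$.
\item In the limit you cannot assume $U_T\to U$ along a subsequence realizing $\langle\varepsilon\rangle_\infty$. It suffices that the right-hand side is nondecreasing in $U_T,U_T'$, with $\limsup_T U_T=U$ and $\limsup_T U_T'=U'$.
\end{itemize}
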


\subsection{Proof of the theorem}

Dividing (\ref{EQEnergynew}) by $T$ gives%
\begin{align}
\label{EQ:first}
    \frac{1}{T}\left\langle \frac{1}{2}\frac{1}{|\Omega |}||u(T)||^{2} \right\rangle _{e}
    +\frac{1}{T}\int_{0}^{T}\varepsilon _{\text{viscous}}+\varepsilon
    _{\text{turb}} \ dt\notag \\
    \leq \frac{1}{T}\left\langle \frac{1}{2}\frac{1}{|\Omega |}
    ||u^{0}||^{2}\right\rangle _{e}+\left\langle \left\langle \frac{1}{|\Omega |}(f,u(t))\,\right\rangle _{e}\right\rangle _{T}.%
\end{align}
The uniform bounds in Proposition (\ref{EQ:bounds})\ imply that the first three terms
satisfy 

\begin{align*}
    \frac{1}{T}\left\langle \frac{1}{2}\frac{1}{|\Omega |}||u(T)||^{2}\right
    \rangle _{e}
    & =\mathcal{O}\left( \frac{1}{T}\right) , \\
    \frac{1}{T}\left\langle \frac{1}{2}\frac{1}{|\Omega |}
    ||u^{0}||^{2}\right\rangle _{e}
    & =\mathcal{O}\left( \frac{1}{T}\right) , \\
    \frac{1}{T}\int_{0}^{T}\varepsilon _{\text{viscous}}+\varepsilon _{\text{turb}}dt
    & =\left\langle \varepsilon \right\rangle _{T}\leq C(data)<\infty.
\end{align*}%
Consider the second term, $\left\langle \left\langle \frac{1}{|\Omega |}%
(f,u)\,\right\rangle _{e}\right\rangle _{T}$, on the RHS of (\ref{EQ:first}%
). Since $f=f(x;\omega _{j})$ is independent of time, the Cauchy-Schwarz
inequality in $L^{2}(0,T)$ and $\left\langle \left\langle \cdot
\right\rangle _{e}\right\rangle _{T}=\left\langle \left\langle \cdot
\right\rangle _{T}\right\rangle _{e}$\ yields 
\begin{align*}
\left\langle \left\langle \frac{1}{|\Omega |}(f,u)\,\right\rangle
_{e}\right\rangle _{T}\leq \left\langle \left\langle \frac{1}{|\Omega |}%
||f||^{2}\,\right\rangle _{e}\right\rangle _{T}^{1/2}\left\langle
\left\langle \frac{1}{|\Omega |}||u||^{2}\right\rangle _{e}\right\rangle
_{T}^{1/2} 
\leq F U_T.
\end{align*}%

The above estimates on the terms in (\ref{EQ:first}) thus imply
    \begin{equation}
\left\langle \varepsilon \right\rangle _{T}
\leq \mathcal{O}\left(\frac{1}{T}%
\right)+F U_T.  \label{eq:NewStep1}
\end{equation}
To bound $F$ in terms of flow quantities, 
take the inner product of (\ref%
{eq:EEVmodel}) with $f(x;\omega _{j})$, integrate the nonlinear term by
parts, ensemble average and time average over $[0,T]$. This yields%
\begin{align}
\label{eq:Step2}
    &F^{2}
    =\frac{1}{T}\left\langle \frac{1}{|\Omega |}(u(T)-u^{0},f)\right\rangle_{e}
    -\left\langle \left\langle \frac{1}{|\Omega |}(uu,\nabla f)\right\rangle_{e}\right\rangle _{T} \notag\\
    &\hspace{1cm}+\left\langle \left\langle \frac{1}{|\Omega |}\int_{\Omega }\nu \nabla
    u:\nabla f\,dx\right\rangle _{e}\right\rangle _{T} 
    +\left\langle \left\langle 
    \frac{1}{|\Omega |}\int_{\Omega }\mu \tau |u^{\prime }(x,t)|_{e}^{2} \, \nabla
    u:\nabla f\,dx\right\rangle _{e}\right\rangle _{T}.
\end{align}
The first term on the RHS is\ $\mathcal{O}(1/T)$ by (\ref{EQ:bounds}). The
second and third on the RHS are bounded by the Cauchy-Schwarz inequality and
(\ref{eq:FandLproperties}). Thus, for any $0<\beta <1$ we have%
\begin{align}
\label{EQseconds}
    \textbf{Second term:} \quad 
    \left\vert \left\langle \left\langle \frac{1}{|\Omega |}(uu,\nabla
    f)\right\rangle _{e}\right\rangle _{T}\right\vert 
    &\leq \left\langle
    \left\langle ||\nabla f(\cdot ;\omega _{j})||_{\infty }\frac{1}{|\Omega |}%
    ||u||^{2}\right\rangle _{e}\right\rangle _{T} \notag \\ 
    &\leq \left( \max_{j}||\nabla f(\cdot ;\omega _{j})||_{\infty }\right)
    \left\langle \left\langle \frac{1}{|\Omega |}||u||^{2}\right\rangle
    _{e}\right\rangle _{T}\notag \\ 
    &\leq\frac{F}{L}  U_T^2 
\end{align}
\begin{align}
\label{EQthirdz}
    \textbf{Third term:} \quad 
     &\left\langle \left\langle \frac{1}{|\Omega |}\int_{\Omega }\nu
    \nabla u(x,t;\omega _{j}):\nabla f(x;\omega _{j}) \,dx\right\rangle
    _{e}\right\rangle _{T}   \notag\\
    &\leq \left\langle \left\langle \frac{\nu ^{2}}{|\Omega |}||\nabla
    u||^{2}\right\rangle _{e}\right\rangle _{T}^{\frac{1}{2}}\left\langle
    \left\langle \frac{1}{|\Omega |}||\nabla f||^{2}\right\rangle
    _{e}\right\rangle _{T}^{\frac{1}{2}} \notag \\ 
    &\leq \left\langle \varepsilon _{\text{viscous}}\right\rangle _{T}^{\frac{1}{2}}%
    \sqrt{\nu }\frac{F}{L} \notag \\ 
    &  
    \leq \frac{\beta}{2} \frac{F}{U_T}  \left\langle \varepsilon_{\mathrm{\text{viscous}}} \right\rangle_{T}
    + \frac{1}{2\beta} U_T F \frac{\nu}{L^{2}}.
\end{align}
The fourth term on the RHS is estimated by successive applications of the
space, time and ensemble Cauchy-Schwarz inequality as follows
\begin{align}
    & \hspace{1.5cm}\textbf{Fourth term:} \quad 
     \left\vert \left\langle \left\langle \frac{1}{|\Omega |}%
    \int_{\Omega }\mu \tau |u^{\prime }|_{e}^{2}\nabla u:\nabla f \,dx\right\rangle
    _{e}\right\rangle _{T}\right\vert   \notag \\
    & \hspace{4cm} \leq \left\langle \left\langle \frac{1}{|\Omega |} \int_{\Omega } \left( \sqrt{%
    \mu \tau }|u^{\prime }|_{e}\right) \left( \sqrt{\mu \tau }|u^{\prime
    }|_{e}|\nabla u|\right) |\nabla f|\,dx\right\rangle _{e}\right\rangle _{T} \notag\\
    &\leq \max_{j}||\nabla f||_{L^{\infty }}\left\langle \left\langle \left( 
    \frac{1}{|\Omega |}\int_{\Omega }\mu \tau |u^{\prime }|_{e}^{2} \,dx\right)
    ^{1/2}\cdot \left( \frac{1}{|\Omega |}\int_{\Omega }\mu \tau |u^{\prime
    }|_{e}^{2}|\nabla u|^{2}\,dx\right) ^{1/2}\right\rangle _{e}\right\rangle
    _{T}.
\end{align}

Since $\max_{j}||\nabla f||_{L^{\infty }}\leq F/L$ (yet more) application of
standard inequalities yields

\begin{align}
    &\textbf{Fourth term:} \quad
    \left\vert \left\langle \left\langle \frac{1}{|\Omega |}\int_{\Omega }\mu
    \tau |u^{\prime }|_{e}^{2}\nabla u:\nabla f \,dx\right\rangle _{e}\right\rangle
    _{T}\right\vert \notag \\
    &\leq \sqrt{\mu \tau }\frac{F}{L}\frac{1}{T}\int_{0}^{T}\left\langle \frac{1}{%
    |\Omega |}\int_{\Omega }|u^{\prime }|_{e}^{2}\,dx\right\rangle
    _{e}^{1/2}\left\langle \frac{1}{|\Omega |}\int_{\Omega }\mu \tau |u^{\prime
    }|_{e}^{2}|\nabla u|^{2}\,dx\right\rangle _{e}^{1/2}dt \notag\\ 
    &\leq \sqrt{\mu \tau }\frac{F}{L}\left\langle \left\langle \frac{1}{|\Omega |}%
    \int_{\Omega }|u^{\prime }|_{e}^{2}\,dx\right\rangle _{e}\right\rangle
    _{T}^{1/2}\left\langle \left\langle \frac{1}{|\Omega |}\int_{\Omega }\mu
    \tau |u^{\prime }|_{e}^{2}|\nabla u|^{2}\,dx\right\rangle _{e}\right\rangle
    _{T}^{1/2} \notag\\ 
    &\leq \sqrt{\mu \tau }\frac{F}{L}\left\langle \left\langle \frac{1}{|\Omega |}%
    ||u^{\prime }||^{2}\right\rangle _{e}\right\rangle _{T}^{1/2}\left( \frac{1}{%
    T}\int_{0}^{T}\varepsilon _{\text{turb}}dt\right) ^{1/2},\\
    &= \sqrt{\mu \tau }\frac{F}{L} 
    U_T
    \left( \frac{1}{%
    T}\int_{0}^{T}\varepsilon _{\text{turb}}dt\right) ^{1/2},
\end{align}
as $|u^{\prime }|_{e}$ is independent of $\omega _{j}$. By the weighted
arithmetic-geometric mean inequality we thus (finally) have

\begin{align}
\label{EQfourthz}
    \textbf{Fourth term:} \quad
    &\left\vert \left\langle \left\langle \frac{1}{|\Omega |}%
    \int_{\Omega }\mu \tau |u^{\prime }(x,t)|_{e}^{2}\nabla u(x,t;\omega
    _{j}):
    \nabla f(x;\omega _{j})\,dx\right\rangle _{e}\right\rangle
    _{T}\right\vert   \notag\\
    &\leq \frac{\beta }{2}\frac{F}{U_T}\left\langle \varepsilon
    _{\text{turb}}\right\rangle _{T}+\frac{\mu \tau }{2\beta }\frac{U_TF}{L^{2}}%
    \left\langle \left\langle \frac{1}{|\Omega |}||u^{\prime
    }||^{2}\right\rangle _{e}\right\rangle _{T}\notag.
\end{align}

Using the three estimates\ (\ref{EQseconds}), (\ref{EQthirdz}), (\ref%
{EQfourthz}) in the (\ref{eq:Step2}) for $F^{2}$ yields%
\begin{align*}
F^{2} 
\leq \mathcal{O}\left( \frac{1}{T}\right) 
&+ \frac{F}{L}  U_T^2 
+\frac{\beta}{2}  \frac{F}{U_T} \left\langle \varepsilon_{\mathrm{\text{viscous}}} \right\rangle_{T}
+ \frac{1}{2\beta} U_T F \frac{\nu}{L^{2}} \\
&+\frac{\beta }{2}\frac{F}{U_T}\left\langle \varepsilon_{\text{turb}}\right\rangle _{T}+\frac{\mu \tau }{2\beta }\frac{U_TF}{L^{2}} {U_T^{\prime}}^2.
\end{align*}%

Canceling one $F$, multiplying by $U_T$ and collecting terms gives an
estimate for \\
$FU_T$ on the RHS of\ (\ref%
{eq:NewStep1})%
\begin{align*}
    FU_T \leq U_T\, \mathcal{O}\left(\frac{1}{T}\right) + \frac{1}{L} U_T^3
    +\frac{\beta}{2} \langle \varepsilon \rangle_T 
    + \frac{U_T}{2\beta} \frac{U_T\nu}{L^2} +\frac{1}{2\beta} \frac{U_T}{L^2} \mu \tau U_T'^2U_T
\end{align*}

\begin{align}
    \left\langle \varepsilon \right\rangle _{T}
    &\leq \mathcal{O}\left( \frac{1}{T}%
    \right) +U_T\, \mathcal{O}\left(\frac{1}{T}\right) + \frac{1}{L} U_T^3
    +\frac{\beta}{2} \langle \varepsilon \rangle_T 
    + \frac{U_T}{2\beta} \frac{U_T\nu}{L^2} +\frac{1}{2\beta} \frac{U_T}{L^2} \mu \tau U_T'^2U_T
\end{align}

The limit superior as $T\rightarrow \infty $ of the last inequality, which
exists by Proposition (\ref{EQ:bounds}), yields the following 

\begin{align*}
    \left\langle \varepsilon \right\rangle _{\infty }
    & =\limsup_{T\rightarrow
    \infty }\frac{1}{T}\int_{0}^{T}\varepsilon _{\text{viscous}}+\varepsilon
    _{\text{turb}} \, dt\\
    &\leq \frac{U^{3}}{L}
    +\frac{\beta }{2}\left\langle \varepsilon \right\rangle _{\infty }+\frac{1}{%
    2\beta }\frac{\nu U^{2}}{L^{2}}+\frac{1}{2\beta }\frac{U}{L^{2}}\mu \tau
    \left( U^{\prime }\right) ^{2}U.
\end{align*}%
Rearranging gives%
\begin{equation*}
\left( 1-\frac{\beta }{2}\right) \left\langle \varepsilon \right\rangle
_{\infty }\leq \frac{U^{3}}{L}\left( 1+\frac{1}{2\beta }\frac{\nu }{LU}+%
\frac{\mu }{2\beta }\frac{\tau U^{\prime }}{L}\frac{U^{\prime }}{U}\right) .
\end{equation*}
Using $T^{\ast }=L/U$, $\alpha =\beta /2,I(u)=(U^{\prime }/U)^{2}$ completes
the proof:%
\begin{equation*}
\left\langle \varepsilon \right\rangle _{\infty }\leq \frac{U^{3}}{L}\left( 
\frac{1}{1-\alpha }+\frac{1}{4\alpha \left( 1-\alpha \right) }\mathcal{R}%
e^{-1}+\frac{\mu }{4\alpha \left( 1-\alpha \right) }\frac{\tau }{T^{\ast }}%
I(u)\right) .
\end{equation*}

\section{Model Existence}
\label{sec_model_existence}
We briefly consider the question of existence of solutions for the model.
For EV models generally, existence is a challenging problem due to the
(generally) non-monotone nonlinearity in the highest derivative terms $\nu
_{\text{turb}}$\ introduces. Suppressing non-essential features, there is a
fundamental issue of the meaning of a term in a weak form like%
\begin{equation*}
\int_{0}^{T}\int_{\Omega }\nu _{\text{turb}}(u)\nabla u(x,t):\nabla \phi (x,t)\,dx\,dt.
\end{equation*}%
With $\nu _{\text{turb}}=\mu \tau |u^{\prime }|_{e}^{2}$\ we have $\nu _{\text{turb}}\in
L^{\infty }(0,T;L^{1}(\Omega ))$. As $\nu _{\text{turb}}\in L^{\infty
}(0,T;L^{1}(\Omega ))$ and $\nabla u\in L^{2}(0,T;L^{2}(\Omega ))$,\ this
term is not well defined even for $\phi \in C^{\infty }(\Omega \times (0,T))$
. There are (at least) two natural responses to the problems this creates
for an existence theory. The first is to develop further the theory to
broader notions of solution or more specialized \'{a} priori estimates.
There has been slow but steady progress in this important direction,
summarized (up to 2014) in Rebollo and Lewandowski \cite{RL14}. The second,
taken in this section, is to interrogate the model and see if the
mathematical difficulties are resolved in a model of greater physical
fidelity.

The \'{a} priori bounds in Proposition \ref{EQ:bounds} imply $|u^{\prime }|_{e}(x,t)\in
L^{2}(\Omega )$. However, this does not imply that the turbulence length
scale $l(x,t)=|u^{\prime }|_{e}\tau $\ is bounded. An unbounded $l(x,t)$ can
lead to the absurd situation where fluctuating structures are (in some
regions) farther apart than the size of the domain. It thus is physically
sensible to modify $l(x,t)$ by capping it at the domain size (or some other
intermediate length), by a hard cap (or a smooth transition):%
\begin{equation}
\begin{array}{cc}
\text{hard cap:} & l(x,t)=\min \{|u^{\prime }|_{e}\tau ,L_{\Omega }\},%
\end{array}
\label{EQ:cappping_l(x,t)}
\end{equation}%
Existence of distributional solutions now follows from Theorem 3.1 p.8 in 
\cite{LL02}.

\begin{theorem}
For the model (\ref{eq:EEVmodel}) suppose $l(x,t)$\ is replaced by (\ref%
{EQ:cappping_l(x,t)}). Then, there exists at least one distributional
solution.
\end{theorem}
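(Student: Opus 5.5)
The plan is to reduce the statement to the abstract existence result, Theorem 3.1 p.8 of \cite{LL02}, by checking that the capped model has the structure that theorem requires. With the hard cap (\ref{EQ:cappping_l(x,t)}) the eddy viscosity is
\[
\nu_{\text{turb}}=\mu\, l\,|u^{\prime}|_{e}=\mu \min\{|u^{\prime}|_{e}\tau ,L_{\Omega}\}\,|u^{\prime}|_{e}.
\]
It is nonnegative, depends continuously on the ensemble $(u_1,\dots,u_J)$, and grows at most linearly: $0\le \nu_{\text{turb}}\le \mu L_{\Omega}|u^{\prime}|_{e}$. I would therefore treat the ensemble as one system for $\mathbf{u}=(u_1,\dots,u_J)$ on the periodic box with a common scalar diffusion coefficient $\nu+\nu_{\text{turb}}(\mathbf{u})$. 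This is the setting of a Navier--Stokes system coupled to an eddy viscosity with linear growth in a velocity-type quantity, which is what \cite{LL02} treats.

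The steps, in order, are as follows.

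\textbf{Galerkin approximation.} Approximate by Galerkin projection onto divergence-free periodic Fourier modes, for all $J$ realizations simultaneously. Local solvability of the resulting ODE system follows from continuity of the coefficients.

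\textbf{Energy estimate.} Test with $u_j$ and ensemble average. Because $\nu_{\text{turb}}\ge 0$, this gives the uniform bounds of the Uniform Bounds proposition: $u_j$ bounded in $L^\infty(0,T;L^2)\cap L^2(0,T;H^1)$, and $\sqrt{\nu_{\text{turb}}}\,\nabla u_j$ bounded in $L^2(L^2)$.

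\textbf{Control of the flux.} The cap gives $\nu_{\text{turb}}\le C|u^{\prime}|_e$, hence
\[
\|\nu_{\text{turb}}\nabla u_j\|_{L^1}\le \|\sqrt{\nu_{\text{turb}}}\|_{L^2}\,\|\sqrt{\nu_{\text{turb}}}\nabla u_j\|_{L^2}.
\]
Here $\|\sqrt{\nu_{\text{turb}}}\|_{L^2}^2\le C\|u^{\prime}\|_{L^1}$, which is bounded. Better still, $\nu_{\text{turb}}\in L^\infty(L^2)$, so $\nu_{\text{turb}}\nabla u_j\in L^2(0,T;L^1)$, and by Sobolev embedding even $\nu_{\text{turb}}\nabla u_j\in L^{1+\delta}$ in space-time. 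This is exactly what makes the weak term $\int\!\int \nu_{\text{turb}}\nabla u:\nabla\phi$ meaningful for smooth $\phi$. It is the point where the uncapped model failed, since there $\nu_{\text{turb}}\in L^\infty(L^1)$ only.

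\textbf{Compactness.} Bound $\partial_t u_j$ in the dual of a sufficiently regular space, then apply Aubin--Lions to get $u_j\to u_j$ strongly in $L^2(0,T;L^2)$ and a.e. It follows that $|u^{\prime}|_e$ converges a.e., and by continuity of the cap so does $\nu_{\text{turb}}$.

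The main obstacle is passing to the limit in the product $\nu_{\text{turb}}(\mathbf u^N)\nabla u_j^N$, since $\nabla u_j^N$ converges only weakly. I would try the following. The cap gives $\nu_{\text{turb}}(\mathbf u^N)\le C|u^{\prime N}|_e$, and $|u^{\prime N}|_e$ converges strongly in $L^{p}$ for $p<10/3$ (from the interpolation bound in $L^{10/3}$ plus a.e. convergence). So $\nu_{\text{turb}}(\mathbf u^N)\to\nu_{\text{turb}}(\mathbf u)$ strongly in $L^2$ of space-time, while $\nabla u_j^N\rightharpoonup\nabla u_j$ weakly in $L^2$. Strong times weak converges in distributions, which yields convergence of the eddy viscosity term against smooth test functions. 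The convective term is handled as for Navier--Stokes.

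Verifying these hypotheses identifies the capped system as an instance of Theorem 3.1 of \cite{LL02}. That theorem, or the argument just sketched, then delivers a distributional solution.
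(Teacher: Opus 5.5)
Your proposal is correct and follows essentially the paper's route: the paper likewise applies Theorem 3.1 of \cite{LL02}, verifying the structural facts you isolate, namely that the capped coefficient $\nu+\nu_{\text{turb}}$ grows at most linearly (so it lies in $L^\infty(0,T;L^2(\Omega))$), is bounded below by $\nu$, and converges strongly in $L^2(\Omega\times(0,T))$ along strongly convergent sequences, which is what lets the strong-times-weak limit go through in the eddy viscosity term. The only difference is that the paper proves the last convergence by a direct pointwise splitting (in effect, that $s\mapsto s\min\{s,1\}$ is globally Lipschitz), which needs only strong $L^2$ convergence of $u_n$, whereas you use a.e.\ convergence plus the $L^{10/3}$ bound and Vitali; both arguments are valid, and your Galerkin/Aubin--Lions sketch spells out what the cited theorem provides.
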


\begin{proof}
We now have $\nu _{\text{turb}}(x,t)=\mu |u^{\prime }|_{e}\min \{|u^{\prime
}|_{e}\tau ,L_{\Omega }\}$. The proof is an application of Theorem 3.1 p.8
in \cite{LL02} for which three conditions (called H1, H2, H3 therein) must
be verified for $A(u)=\nu +\mu |u^{\prime }|_{e}\tau \min \{|u^{\prime
}|_{e}\tau ,L_{\Omega }\}.$ We will verify a notationally simpler version of
these that implies H1,H2 and H3 for an $A(u)$ where we suppress
non-essential features. Specifically, let 
\begin{equation*}
A(u):=\nu +|u|\min \{|u|,1\}.
\end{equation*}%
Hypothesis H1 is implied by: for $u\in L^{\infty }(0,T;L^{2}(\Omega ))\cap
L^{2}(0,T;H^{1}(\Omega ))$, it follows that $A(u)\in L^{\infty
}(0,T;L^{2}(\Omega ))$ and 
\begin{equation*}
||A(u)||_{L^{\infty }(0,T;L^{2}(\Omega ))}\leq C\left( 1+||u||_{L^{\infty
}(0,T;L^{2}(\Omega ))}\right) .
\end{equation*}%
This is clearly true since a.e. $|u|\min \{|u|,1\}\leq 1\cdot |u|$.

Hypothesis H2 is implied by: for some $a_{0}>0,A(u)\geq a_{0}$. This holds
with $a_{0}=\nu $.

Hypothesis H3 is implied by the following. For a sequence $u_{n}\in
L^{\infty }(0,T;L^{2}(\Omega ))\cap L^{2}(0,T;H^{1}(\Omega ))$ that
converges to $u_{\infty }\in L^{\infty }(0,T;L^{2}(\Omega ))\cap
L^{2}(0,T;H^{1}(\Omega ))$ weakly in $L^{2}(0,T;H^{1}(\Omega ))$ and
strongly in $L^{2}(\Omega \times (0,T))$ it follows that $A(u_{n})$
converges to $A(u_{\infty })$ strongly in $L^{2}(\Omega \times (0,T))$.
Condition H3 follows (after obvious rescaling) from the next lemma.
\end{proof}

\begin{lemma}
Let $A(u):=\nu +|u|\min\{|u|,1\}.$\ For a sequence $u_{n}\in L^{\infty
}(0,T;L^{2}(\Omega ))$ that converges to $u_{\infty }\in L^{\infty
}(0,T;L^{2}(\Omega ))$ strongly in $L^{2}(\Omega \times (0,T))$, it follows
that 
\begin{equation*}
A(u_{n})\rightarrow A(u_{\infty })\quad \text{strongly in }L^{2}(\Omega
\times (0,T)).
\end{equation*}
\end{lemma}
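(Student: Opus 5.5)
The plan is to show that the map $g(s):=s\min\{s,1\}$ on $[0,\infty)$ is globally Lipschitz. The constant $\nu$ cancels in $A(u_n)-A(u_\infty)$, so the lemma then reduces directly to strong $L^2$ convergence of $u_n$.

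First step: establish the pointwise estimate
\[
\bigl|\, |a|\min\{|a|,1\}-|b|\min\{|b|,1\}\,\bigr| \le 2\,\bigl||a|-|b|\bigr| \le 2|a-b| \qquad \text{for all } a,b\in\mathbb{R}^3 .
\]
To prove it, note that $g(s)=s^2$ for $0\le s\le 1$ and $g(s)=s$ for $s\ge 1$. Thus $g$ is continuous and piecewise $C^1$, and its derivative satisfies $0\le g'(s)\le 2$ away from $s=1$. By the mean value theorem on each piece, joined at $s=1$ by continuity, $g$ is Lipschitz with constant $2$. The second inequality is the reverse triangle inequality.

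Second step: apply the estimate with $a=u_n(x,t)$ and $b=u_\infty(x,t)$, square, and integrate over $\Omega\times(0,T)$. This gives
\[
\|A(u_n)-A(u_\infty)\|_{L^2(\Omega\times(0,T))}\le 2\,\|u_n-u_\infty\|_{L^2(\Omega\times(0,T))}\to 0 .
\]
To confirm that $A(u_n)-A(u_\infty)$ makes sense in $L^2(\Omega\times(0,T))$, note that $g(|u|)\le |u|$ and $\Omega\times(0,T)$ has finite measure. Hence $A(u_n)$ and $A(u_\infty)$ lie in $L^2(\Omega\times(0,T))$, and the difference is the constant-free quantity $g(|u_n|)-g(|u_\infty|)$.

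I do not expect a real obstacle; the only point requiring care is the Lipschitz bound across the kink at $s=1$. For the actual model, with $|u'|_e$ in place of $|u|$ and the constants $\mu,\tau,L_\Omega$, the same argument applies after rescaling. One also uses that $u\mapsto |u'|_e$ is Lipschitz in $L^2$, since it is the Euclidean norm of a linear map of the ensemble $(u_1,\dots,u_J)$.
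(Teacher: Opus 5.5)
Your argument is correct, and it is a more streamlined route to the conclusion than the paper's. The paper never isolates the scalar function $g(s)=s\min\{s,1\}$. Instead it sets $m_n=\min\{|u_n|,1\}$ and uses only that $x\mapsto\min\{|x|,1\}$ is $1$-Lipschitz. It then splits $\Omega\times(0,T)$ into the $n$-dependent regions $\{|u_n|\le 1\}$, $\{|u_n|>1,\ |u_\infty|\ge 1\}$ and $\{|u_n|>1,\ |u_\infty|<1\}$. On each region it adds and subtracts $|u_n|m_\infty$ or $m_n|u_\infty|$, so that every difference $m_n-m_\infty$ or $|u_n|-|u_\infty|$ is multiplied by a factor bounded by $1$, and it closes with $\varepsilon$--$N$ bookkeeping.

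You make the same case distinction once, on the half-line rather than on the space-time domain, by applying the mean value theorem to the two pieces of $g$. This gives the explicit bound $\|A(u_n)-A(u_\infty)\|_{L^2}\le 2\|u_n-u_\infty\|_{L^2}$ directly. Tracking the paper's region estimates pointwise gives the same constant, since each is at most $4|u_n-u_\infty|^2$, so the $\varepsilon$-splitting there is not actually needed.

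Your version therefore yields a global, quantitative Lipschitz estimate for the superposition operator $u\mapsto A(u)$. Your closing remark that $u\mapsto|u'|_e$ is Lipschitz in $L^2$ also makes explicit a step the paper leaves under ``obvious rescaling.'' The paper's version avoids calculus and relies only on the truncation's Lipschitz property, but otherwise it is a longer path to the same estimate.
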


\begin{proof}
Let 
\begin{equation*}
m_{n}:=\min\{|u_{n}|,1\} \quad \text{and} \quad m_{\infty }:=\min\{|u_{\infty }|,1\}.
\end{equation*}%
Since the function $x\rightarrow \min\{|x|,1\}$ is Lipschitz with constant 1, 
$u_{n}\rightarrow u_{\infty }$ strongly in $L^{2}(\Omega \times (0,T))$
implies $m_{n}\rightarrow m_{\infty }$ strongly in $L^{2}(\Omega \times
(0,T))$ as%
\begin{equation*}
||m_{n}-m_{\infty }||_{L^{2}(\Omega \times (0,T))}\leq ||u_{n}-u_{\infty
}||_{L^{2}(\Omega \times (0,T))}.
\end{equation*}%
Let $ N\in\mathbb{N}$ such that $\forall n>N,$ 
\begin{align*}
    ||{u_n -u_\infty}||_{L^2(\Omega \times (0,T))}^2 
    \leq \frac{\varepsilon^2}{12}.
\end{align*}
Partition $\Omega \times (0,T)$ into three disjoint sets, for any $n>N$, as follows: 
\begin{align*}
    A_n = \{(x, &t ) : |u_n| \leq 1\}, 
    B = \{ (x, t): |u_n|>1, |u_\infty|\geq1\} \\
    &\text{and } C = \{ (x, t): |u_n|>1, |u_\infty|<1, \}
\end{align*} %
We need to estimate the following integral:
\begin{align*}
    \int_{\Omega \times (0,T)} |A(u_{n})-A(u_{\infty })|^{2}\,dx\,dt
    =\left( \int_{A_n}+\int_{B}+\int_{C}\right) ||u_n|m_n-|u_\infty|m_\infty|^{2}\,dx\,dt \\
\end{align*}%
For the integrals over $A_n$ and $B$, subtract and add $|u_{n}|m_{\infty }$. Since $|u_{n}|\leq 1$ and $0\leq m_{\infty }\leq1$ on $A_n$, 
\begin{gather*}
2\int_{A_n}|u_{n}|^{2}|m_{n}-m_{\infty }|^{2}\,dx\,dt+2\int_{A_n}||u_{n}|-|u_{\infty
}||^{2}|m_{\infty }|^{2}\,dx\,dt \\
\leq 2\int_{A_n}|m_{n}-m_{\infty }|^{2}\,dx\,dt+2\int_{A_n}|u_{n}-u_{\infty
}|^{2}\,dx\,dt,
\end{gather*}%
which can be made smaller than, $\varepsilon ^{2}/3$. For the second integral on $B$, $m_{n}=m_{\infty }=1$, thus $%
m_{n}-m_{\infty }=0$ and%
\begin{gather*}
2\int_{B}|u_{n}|^{2}|m_{n}-m_{\infty }|^{2}\,dx\,dt+2\int_{B}||u_{n}|-|u_{\infty
}||^{2}|m_{\infty }|^{2}\,dx\,dt \\
\leq 2\int_{B}|u_{n}-u_{\infty
}|^{2}\,dx\,dt.
\end{gather*}%
This is also smaller than $\varepsilon ^{2}/3$. For
the third integral, subtract and add $m_n |u_\infty|$ and use the fact that $m_{n}=1$ and $m_{\infty }=|u_{\infty }|<1$ on $C$. Thus,%
\begin{align*}
    \int_{C} \left|
    |u_n| m_n -  |u_\infty |m_\infty \right|^2
    & \leq 2\int_{C}
    |u_n- u_\infty|^2 {|m_n|^2}
    +  2\int_{C} {|u_\infty|^2} |m_n- m_\infty |^2 
\end{align*}
which is again less than $\varepsilon^2/3.$
Thus, the sum of the three will be less than $\varepsilon^2 $ and convergence follows.
\end{proof}
We also note that the energy dissipation rate estimates of Theorem \ref{main_theorem} follow,
by the same proof, for the capped length scale.

\begin{corollary}
Suppose all the hypotheses of Theorem \ref{main_theorem} hold with $l(x,t)=|u^{\prime
}|_{e}\tau $\ replaced by $l(x,t)=\min \{|u^{\prime }|_{e}\tau ,L_{\Omega
}\} $. Then the conclusions of Theorem \ref{main_theorem} remain true.
\end{corollary}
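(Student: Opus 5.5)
The plan is to rerun the proof of Theorem \ref{main_theorem} essentially line by line, with the turbulent viscosity now equal to
\[
\nu_{\text{turb}}=\mu |u'|_e \min\{|u'|_e\tau ,L_\Omega\}.
\]
The key point is the pointwise inequality
\[
0\le \nu_{\text{turb}}\le \mu\tau|u'|_e^2 .
\]
This viscosity is still independent of $\omega_j$. The energy inequality (\ref{EQEnergyIneq}) is assumed for the capped model, with $\mu\tau|u'|_e^2$ replaced by $\nu_{\text{turb}}$ in the dissipation term. As before, we define $\varepsilon_{\text{turb}}$ as the ensemble and space average of $\nu_{\text{turb}}|\nabla u|^2$.

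First, I will check that the uniform bounds of Proposition (\ref{EQ:bounds}) persist. They come only from the energy inequality and the nonnegativity of the dissipation, so they carry over. In addition, $\int_\Omega \nu_{\text{turb}}\,dx\le \mu\tau\|u'\|^2$ stays bounded.

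Second, I will reproduce the steps one at a time.
\begin{itemize}
\item Step (\ref{eq:NewStep1}) uses only the energy inequality and Cauchy--Schwarz on $(f,u)$, so it is unchanged.
\item The identity (\ref{eq:Step2}) for $F^2$ is obtained by testing the model with $f$. The only change is that the fourth term now carries $\nu_{\text{turb}}$ in place of $\mu\tau|u'|_e^2$.
\item The second and third terms are bounded exactly as in (\ref{EQseconds}) and (\ref{EQthirdz}).
\end{itemize}

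The fourth term is the one place requiring care, and it is the main step. Writing $\nu_{\text{turb}}=\sqrt{\nu_{\text{turb}}}\,\sqrt{\nu_{\text{turb}}}$ in place of the old split $\sqrt{\mu\tau}|u'|_e\cdot\sqrt{\mu\tau}|u'|_e$, the same chain of space, ensemble and time Cauchy--Schwarz inequalities gives
\[
\Big|\Big\langle\Big\langle \frac{1}{|\Omega|}\int_\Omega \nu_{\text{turb}}\nabla u:\nabla f\,dx\Big\rangle_e\Big\rangle_T\Big|
\le \frac{F}{L}\Big\langle \frac{1}{|\Omega|}\int_\Omega \nu_{\text{turb}}\,dx\Big\rangle_T^{1/2}\langle\varepsilon_{\text{turb}}\rangle_T^{1/2}.
\]
Applying the pointwise bound then yields
\[
\frac{1}{|\Omega|}\int_\Omega\nu_{\text{turb}}\,dx\le \mu\tau\,\Big\langle\frac{1}{|\Omega|}\|u'\|^2\Big\rangle_e ,
\]
which recovers exactly the bound $\sqrt{\mu\tau}\,\frac{F}{L}\,U_T'\,\langle\varepsilon_{\text{turb}}\rangle_T^{1/2}$.

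From there the weighted AM--GM step, the collection of terms, the passage to the limit superior and the choice $\alpha=\beta/2$ are identical to the original proof. This produces the same constant in front of $U^3/L$.

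The only genuine obstacle I foresee is justifying the test with $f$ for a weak solution of the capped model. I would handle it as the paper implicitly does, noting that it is in fact easier here. Since $\nu_{\text{turb}}\le \mu L_\Omega|u'|_e$, we have $\nu_{\text{turb}}\in L^\infty(0,T;L^2(\Omega))$. Combined with $\nabla u\in L^2(L^2)$ and $f$ smooth, the term $\int\nu_{\text{turb}}\nabla u:\nabla f$ is well defined. Hence the distributional solutions given by the existence theorem above can be tested against $f$.
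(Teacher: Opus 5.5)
Your proposal is correct and follows the paper's argument. The paper only remarks that the estimates follow ``by the same proof'' for the capped length scale, and your rerun makes this precise: in the fourth term you split $\nu_{\text{turb}}=\sqrt{\nu_{\text{turb}}}\,\sqrt{\nu_{\text{turb}}}$ (the original split $\sqrt{\mu\tau}|u'|_e\cdot\sqrt{\mu\tau}|u'|_e|\nabla u|$ would not match the capped dissipation) and then use $\nu_{\text{turb}}\le\mu\tau|u'|_e^2$, which is exactly the adjustment needed and gives the factor $U_T'$.
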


\section{Conclusions}

The Constantin-Doering-Foias theory has yielded powerful results connecting
the theory of the Navier-Stokes equations to the physical phenomenology of
turbulence. Yet, its potential for providing analysis in support of
practical computation for turbulence models is even greater. Extending their
theory here has shown that EEV models faithfully replicate energy
dissipation rates when near wall effects are negligible. The question then
arises of the effect of walls and especially near-wall boundary layers where 
$\nabla u$\ is large. Looking carefully at the proof of Theorem \ref{main_theorem}, it is
clear that the same result holds (with essentially the same proof) under
no-slip boundary conditions provided the additional assumption $f(x)=0$ $on$ 
$\partial \Omega $ is made.

\begin{corollary}
Let $l(x,t)=\min \{|u^{\prime }|_{e}\tau ,L_{\Omega }\}$\ or $%
l(x,t)=|u^{\prime }|_{e}\tau $. Consider weak solutions satisfying the
energy inequality for the EEV model with periodic boundary conditions
replaced by no-slip boundary conditions on $\partial \Omega .$ Assume
additionally $f(x)=0$ $on$ $\partial \Omega ,$\ e.g. $f\in H_{0}^{1}\left(
\Omega \right) ^{3}$. Then the time and ensemble averaged rate of energy
dissipation for the ensemble eddy viscosity model satisfies the following.
For model parameter $\mu (\simeq 0.55)$ and selected model time scale $\tau
(<T^{\ast })$ and for any $0<\alpha <1,$%
\begin{equation*}
\left\langle \varepsilon \right\rangle _{\infty }\leq \left( \frac{1}{%
1-\alpha }+\frac{1}{4\alpha (1-\alpha )}\mathcal{R}e^{-1}+\frac{\mu }{%
4\alpha (1-\alpha )}\frac{\tau }{T^{\ast }}I(u)\right) \frac{U^{3}}{L}.
\end{equation*}
\end{corollary}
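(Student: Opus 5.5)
The plan is to repeat the proof of Theorem \ref{main_theorem} step by step and check that each step survives the move from periodic to no-slip boundary conditions. Only two places use the boundary conditions, and both are integrations by parts: the energy inequality itself, and the identity (\ref{eq:Step2}) obtained by testing with $f$.

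\textbf{Step 1: the energy part.} For weak solutions vanishing on $\partial\Omega$ we assume the energy inequality (\ref{EQEnergyIneq}) directly, so (\ref{EQEnergynew}) and (\ref{EQ:first}) hold as before. The uniform-in-$T$ bounds of Proposition (\ref{EQ:bounds}) also still hold. They come from the energy inequality together with the Poincar\'e inequality, which on a bounded domain holds for $H_0^1$ functions even without the mean-zero condition. Cauchy--Schwarz then gives $\langle\varepsilon\rangle_T\le \mathcal{O}(1/T)+FU_T$ exactly as in (\ref{eq:NewStep1}).

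\textbf{Step 2: testing with $f$.} We take the inner product of (\ref{eq:EEVmodel}) with $f$ and check each term.
\begin{itemize}
\item \emph{Pressure.} This is the term I expect to be the main obstacle. We have $(\nabla p,f)=-(p,\nabla\cdot f)+\int_{\partial\Omega}p\,f\cdot n$. This vanishes because $\nabla\cdot f=0$ and $f=0$ on $\partial\Omega$.
\item \emph{Nonlinear term.} Writing $(u\cdot\nabla u,f)=-(uu,\nabla f)$ uses $\nabla\cdot u=0$ and the vanishing of the boundary term, which holds since $u=0$ (and $f=0$) there.
\item \emph{Viscous and eddy-viscous terms.} Integrating $-\nabla\cdot((\nu+\nu_{\text{turb}})\nabla u)$ against $f$ by parts leaves the boundary integral $\int_{\partial\Omega}(\nu+\nu_{\text{turb}})\,\partial_n u\cdot f$. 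This is exactly where $f=0$ on $\partial\Omega$ is needed: without it, the boundary-layer gradients $\partial_n u$ would enter uncontrolled. With $f\in H_0^1$, the weak formulation with test function $f$ gives (\ref{eq:Step2}) directly, with no boundary integrals.
\end{itemize}
One technical point remains: $\nu_{\text{turb}}\nabla u$ must be paired with $\nabla f$. As in the periodic case this is handled by the a priori finiteness of $\langle\varepsilon_{\text{turb}}\rangle_T$ and $\nabla f\in L^\infty$. For the uncapped length scale we accept the same weak-solution framework the theorem already assumes.

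\textbf{Step 3: the estimates.} The second, third and fourth terms are bounded exactly as in (\ref{EQseconds}), (\ref{EQthirdz}) and (\ref{EQfourthz}). These bounds use only Cauchy--Schwarz, the properties (\ref{eq:FandLproperties}) of $L$, and the independence of $|u'|_e$ from $\omega_j$, none of which involves the boundary. For no-slip we take $L$ without the $L_\Omega$ periodicity meaning, e.g.\ with $L_\Omega$ read as the diameter of $\Omega$.

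\textbf{Step 4: conclusion.} Combining with Step 1, absorbing $\frac{\beta}{2}\langle\varepsilon\rangle_T$, taking $\limsup_{T\to\infty}$ and setting $\alpha=\beta/2$ gives the stated bound. For the capped length scale $l=\min\{|u'|_e\tau,L_\Omega\}$ we have $\nu_{\text{turb}}\le \mu\tau|u'|_e^2$. The fourth-term estimate then still holds with $\sqrt{\nu_{\text{turb}}}\le\sqrt{\mu\tau}\,|u'|_e$, so the same constants result.
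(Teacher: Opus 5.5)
Your proposal is correct and follows the paper's route. The paper only remarks that the proof of Theorem \ref{main_theorem} goes through essentially unchanged once $f=0$ on $\partial\Omega$ (so $f$ is an admissible divergence-free $H_0^1$ test function) removes the pressure and normal-flux boundary terms when testing with $f$. Your Steps 1--4 make that check explicit, including the capped length scale via $\nu_{\text{turb}}\le\mu\tau|u'|_e^2$.
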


Heuristically, the above condition $f(x)=0$ on $\partial \Omega $ means that
boundary layers are weak enough that the periodic picture does not change.
The case of stronger boundary layers was studied using asymptotic analysis
by Speziale, Abid and Anderson \cite{SAA92}.\textsc{\ }It can also be
studied through energy dissipation rate estimates in shear flow following
the Constantin-Doering-Foias theory.\ Thus, the shear flow EEV case is an
important open problem.

The secondary failure model of eddy viscosity models is that they
cannot
account for intermittent energy flow from fluctuations back to means. The
evolution of this energy flow was studied in \cite{JL16} and coherent
extensions of EV models developed including this effect. Analysis of energy
dissipation rates in these expanded models is an important open problem.
Current computational resources limit the number $J$ of realizations
achievable (and Carati, Rogers and Wray \cite{CRW02} report good results
with $J=16$). As resources continue to expand, $J$ will increase so
understanding limits as $J\rightarrow \infty $\ is also an important open
problem.

\section*{Acknowledgments}

This work of William Layton and Nanda Nechingal Raghunathan was partially supported by the NSF grant DMS 2410893.

\end{document}